\algnewcommand\algorithmicinit{\textbf{Initialization:}}
\algnewcommand\init{\item[\algorithmicinit]}
\algnewcommand\algorithmicawake{\textsf{\textit{{AWAKE}}}}
\algnewcommand\awake{\item[\algorithmicawake]}
\algnewcommand\algorithmicidle{\textsf{\textit{{IDLE}}}}
\algnewcommand\idle{\item[\algorithmicidle]}
\algnewcommand\algorithmicinput{\textbf{Input:}}
\algnewcommand\inpt{\item[\algorithmicinput]}
\algnewcommand\algorithmicoutput{\textbf{Output:}}
\algnewcommand\outpt{\item[\algorithmicoutput]}
\renewcommand{\natural}{{\mathbb{N}}}
\newcommand{\real}{{\mathbb{R}}}
\newcommand{\until}[1]{\{1,\ldots,#1\}} 
\newcommand{\BB}{\mathcal{B}}
\newcommand{\EE}{\mathcal{E}}
\newcommand{\GG}{\mathcal{G}}
\newcommand{\VV}{\mathcal{V}}
\newcommand{\WW}{\mathcal{W}}
\newcommand{\m}{\mathop{\textrm{minimize}}}
\newcommand{\R}{\mathbb{R}}
\newcommand{\NNii}{\mathcal{N}_{i}^{\text{in}}}
\newcommand{\NNio}{\mathcal{N}_{i}^{\text{out}}}
\newcommand{\lxi}{x_j^k{\mid}_i}
\newcommand{\E}{\mathbb{E}}
\newcommand{\Pb}{\text{P}}
\newcommand{\StatexIndent}[1][3]{%
  \setlength\@tempdima{\algorithmicindent}%
  \Statex\hskip\dimexpr#1\@tempdima\relax}
\newcommand{\pushright}[1]{\ifmeasuring@#1\else\omit\hfill$\displaystyle#1$\fi\ignorespaces}
\newcommand{\pushleft}[1]{\ifmeasuring@#1\else\omit$\displaystyle#1$\hfill\fi\ignorespaces}
\newtheorem{theorem}{Theorem}
\newtheorem{assumption}{Assumption}
\newtheorem{remark}{Remark}[section]
  {\list{}{\leftmargin=0.3in}\item[]\noindent\rule[0.5ex]{\linewidth}{0.5pt}}%
  {\noindent\rule[0.5ex]{\linewidth}{0.5pt}\endlist\clearpage}
\newcommand\oprocendsymbol{\hbox{$\square$}}
\newcommand\oprocend{\relax\ifmmode\else\unskip\hfill\fi\oprocendsymbol}
\title{Distributed Submodular Minimization via Block-Wise Updates and Communications\footnote{This result is part of
  a project that has received funding from the European Research Council (ERC) under
  the European Union's Horizon 2020 research and innovation programme
  (grant agreement No 638992 - OPT4SMART).}}
\author{Andrea Testa, Francesco Farina, Giuseppe Notarstefano}
\date{\small Department of Electrical, Electronic and Information Engineering,\\
Alma Mater Studiorum Universit\`{a} di Bologna, Bologna, Italy\\
  $\{$\texttt{a.testa}, \texttt{franc.farina},
  \texttt{giuseppe.notarstefano}$\}$\texttt{@unibo.it}
}
\begin{document}
\maketitle
\begin{abstract}
In this paper we deal with a network of computing agents with local processing and neighboring communication capabilities that aim at solving (without any central unit) a submodular optimization problem. The cost function is the sum of many local submodular functions and each agent in the network has access to one function in the sum only. In this \emph{distributed} set-up, in order to preserve their own privacy, agents communicate with neighbors but do not share their local cost functions. We propose a distributed algorithm in which agents resort to the Lov\`{a}sz extension of their local submodular functions and perform local updates and communications in terms of single blocks of the entire optimization variable. Updates are performed by means of a greedy algorithm which is run only until the selected block is computed, thus resulting in a reduced computational burden.
The proposed algorithm is shown to converge in expected value to the optimal cost of the problem, and an approximate solution to the submodular problem is retrieved by a thresholding operation. As an application, we consider a distributed image segmentation problem in which each agent has access only to a portion of the entire image. While agents cannot segment the entire image on their own, they correctly complete the task by cooperating through the proposed distributed algorithm.
\end{abstract}

\section{Introduction}
Many combinatorial problems in machine learning can be cast as the minimization of submodular functions (i.e., set functions that exhibit a diminishing marginal returns property). Applications include isotonic regression, image segmentation and reconstruction, and semi-supervised clustering (see, e.g.,~\cite{bach2013learning}).

In this paper we consider the problem of minimizing in a distributed fashion (without any central unit) the sum of $N\in\natural$ submodular functions, i.e.,
\begin{equation}\label{pb:standard}
  \begin{aligned}
      & \m_{X\subseteq V}
      & & F(X)=\sum_{i=1}^N F_i(X)
  \end{aligned}
\end{equation}
where $V=\until{n}$ is called the \emph{ground set} and the functions $F_i$ are submodular.

We consider a scenario in which problem~\eqref{pb:standard} is to be solved by $N$ peer agents communicating locally and performing local computations.
The communication is modeled as a \emph{directed} graph $\GG=(\VV,\EE)$, where $\VV=\until{N}$ is the set of agents and $\EE\subseteq\VV\times\VV$ is the set of directed edges in the graph. Each agent $i$ \emph{receives} information only from its in-neighbors, i.e., agents $j\in
\NNii\triangleq\{j\mid (j,i)\in\EE\}\cup \{i\}$, while it \emph{sends} messages only to its out-neighbors $j\in\NNio\triangleq\{j\mid (i,j)\in\EE\}\cup\{i\}$, where we have included agent $i$ itself in these sets.
In this set-up, each agent knows only a portion of the entire optimization problem. Namely, agent $i$, knows the function $F_i(X)$ and the set $V$ only. Moreover, the local functions $F_i$ must be maintained private by each agent and cannot be shared.

In order to give an insight on how the proposed scenario arises, let us introduce the distributed image segmentation problem that we will consider later on as a numerical example. Given a certain image to segment, the ground set $V$ consists of the pixels of such an image. We consider a scenario in which each of the $N$ agents in the network has access to only a portion $V_i\subseteq V$ of the image. In Figure~\ref{fig:cams} a concept with the associated communication graph is shown. Given $V_i$, the local submodular functions $F_i$ are constructed by using some locally retrieved information, like pixel intensities. While agents do not want to share any information on how they compute local pixel intensities (due to, e.g., local proprietary algorithms), their common goal is to correctly segment the entire image.

\begin{figure}[t]
  \centering
  \includegraphics[width=.38\textwidth]{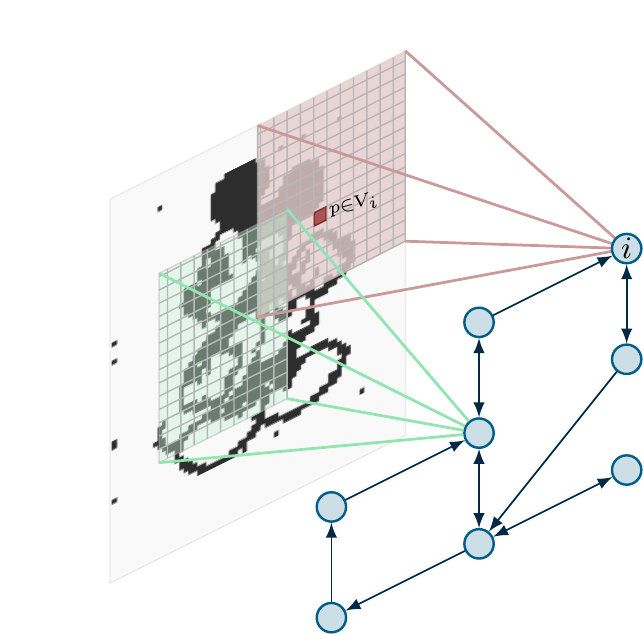}
  \caption{Distributed image segmentation set-up.
  Network agents (blue nodes) have access only to a subset (colored grids)
  of the whole image pixels. Directed arcs between nodes represent
  the communication links.
  }
  \label{fig:cams}
\end{figure}

Such a distributed set-up is motivated by the modern organization of data and computational power. It is extremely common for computational units to be connected in networks, sharing some resources, while keeping other private, see, e.g.,~\cite{stone2000multiagent,decker1987distributed}. Thus, distributed algorithms in which agents do not need to disclose their own private data will represent a novel disruptive technology.
This paradigm has received significant attention in the last decade in the area of control and signal processing,~\cite{ahmed2016distributed,chen2018internet}.

\paragraph*{Related work}
Submodular minimization problems can be mainly addressed in two ways. On the one hand, a number of combinatorial algorithms have been proposed~\cite{iwata2001combinatorial,iwata2009simple}, some based on graph-cut algorithms~\cite{jegelka2011fast} or relying on problems with a particular structure~\cite{kolmogorov2012minimizing}. On the other hand, convex optimization techniques can be exploited to face submodular minimization problems by resorting the so called Lov\`{a}sz extension. Many specialized algorithms have been developed in the last years by building on the particular properties of submodular functions (see~\cite{bach2013learning} and reference therein).
In this paper we focus on the problem of minimizing the sum of many submodular functions, which has received attention in many works~\cite{stobbe2010efficient,kolmogorov2012minimizing,jegelka2013reflection,fix2013structured,nishihara2014convergence}.
In particular, centralized algorithms have been proposed based on smoothed convex minimization~\cite{stobbe2010efficient} or alternating projections and splitting methods~\cite{jegelka2013reflection}, whose convergence rate is studied in~\cite{nishihara2014convergence}. This problem structure typically arises, for example, in Markov Random Fields (MRF) Maximum a-Posteriori (MAP) problems~\cite{shanu2016min,fix2013structured}, a notable example of which is image segmentation.

While a vast literature on distributed continuous optimization has been developed in the last years (see, e.g.,~\cite{notarstefano2019distributed}), distributed approaches for tackling (submodular) combinatorial optimization problems started to appear only recently.
Submodular maximization problems have been treated and approximately solved in a distributed way in several works
~\cite{kim2011distributed,mirzasoleiman2013distributed,bogunovic2017distributed,williams2017decentralized,gharesifard2017distributed,grimsman2017impact}. In particular, distributed submodular maximization subject to matroid constraints is addressed in~\cite{williams2017decentralized,gharesifard2017distributed}, while in~\cite{grimsman2017impact}, the authors handle the design of communication  structures maximizing the worst case efficiency of the well-known greedy algorithm for submodular maximization when applied over networks.
Regarding distributed algorithms for submodular minimization problems, they have not received much attention yet.
In~\cite{jaleel2018real} a distributed subgradient method is proposed, while in~\cite{testa2018distributed} a greedy column generation algorithm is given. All these approaches involve the communication/update of the entire decision variable at each time instant. This can be an issue when the decision variable is extremely large. Thus, block-wise approaches like those proposed in~\cite{notarnicola2018distributed} should be explored.

\paragraph*{Contribution and organization}
The main contribution of this paper is the MIxing bloCKs and grEedY (MICKY) method, i.e., a distributed block-wise algorithm for solving problem~\eqref{pb:standard}. At any iteration, each agent computes a weighted average on local copies of neighbors solution estimates. Then, it selects a random block and performs an ad-hoc (block-wise) greedy algorithm (based on the one in~\cite[Section~3.2]{bach2013learning}) until the selected block is updated. Finally, based on the output of the greedy algorithm, the selected block of the local solution estimate is updated and broadcast to the out-neighbors.
The proposed algorithm is shown to produce cost-optimal solutions in expected value by showing that it is an instance of the Distributed Block Proximal Method presented in~\cite{farina2019arXivProximal}. In fact, the partial greedy algorithm performed on the local submodular cost function $F_i$ is shown to compute a block of a subgradient of its Lov\`{a}sz extension.

A key property of this algorithm is that each agent is required to update and transmit only one block of its solution estimate.
In fact, it is quite common for networks to have communication bandwidth restrictions. In these cases the entire state variable may not fit the communication channels and, thus, standard distributed optimization algorithms cannot be applied. Furthermore, the greedy algorithm can be very time consuming when an oracle for evaluating the submodular functions is not available and, hence, halting it earlier can reduce the computational load.

The paper is organized as follows. The distributed algorithm is presented and analyzed in Section~\ref{sec:algo}, and it is tested on a distributed image segmentation problem in Section~\ref{sec:numerical}.

\vspace{-2ex}

\paragraph*{Notation and definitions}
Given a vector $x\in\R^n$, we denote by $x_\ell$ the $\ell$-th entry of $x$.
Let $V$ be a finite, non-empty set with cardinality $|V|$. We denote by $2^V$ the set of
all its $2^{|V|}$ subsets.
Given a set $X\subseteq V$, we denote by $\mathbf{1}_X\in\real^{|V|}$
its indicator vector, defined as
$\mathbf{1}_{X_\ell}=1$ if $\ell \in X$, and $0$ if $\ell \not\in X$.
A set function $F:2^V\to \R$ is said to be submodular if it exhibits the diminishing marginal returns property, i.e., for all $A,B\subseteq V$, $A\subseteq B$ and for all $j\in V\setminus B$, it holds that $F(A\cup\{j\})- F(A)\geq F(B\cup\{j\})- F(B)$. In the following we assume $F(X)<\infty$ for all $X\subseteq V$ and, without loss of generality, $F(\emptyset)=0$.
Given a submodular function $F:2^V\to \R$, we define the associated \emph{base polyhedron} as $\BB(F):=\{w\in\real^n\mid \sum_{\ell\in X} w_\ell \leq F(X)\; \forall X\in 2^V,\; \sum_{\ell\in V} w_\ell = F(V)\}$ and by $f(x)=\max_{w\in\BB(F)}w^\top x$ the Lov\`{a}sz extension of $F$.

\section{Distributed algorithm}\label{sec:algo}
\subsection{Algorithm description}
In order to describe the proposed algorithm, let us introduce the following nonsmooth convex optimization problem
\begin{equation}\label{pb:lovasz}
    \begin{aligned}
        & \m_{x\in [0,1]^n}
        & & f(x)=\sum_{i=1}^N f_i(x)
    \end{aligned}
\end{equation}
where $f_i(x):\R^n\to\R$ is the Lov\`{a}sz extension of $F_i$ for all $i\in\until{N}$. It can be shown that solving problem~\eqref{pb:lovasz} is equivalent to solving problem~\eqref{pb:standard} (see, e.g.,~\cite{lovasz1983submodular} and~\cite[Proposition~3.7]{bach2013learning}).
In fact, given a solution $x^\star$ to problem~\eqref{pb:lovasz}, a solution $X^\star$ to problem~\eqref{pb:standard} can be retrieved by thresholding the components of $x^\star$ at an arbitrary $\tau\in[0,1]$ (see~\cite[Theorem 4]{bach2019submodular}), i.e.,
\begin{align}
\label{eq:thresh}
X^\star = \{\ell\mid x^\star_\ell>\tau\}.
\end{align}
Notice that, given $F_i$ in problem~\eqref{pb:standard}, each agent $i$ in the network is able to compute $f_i$, thus, in the considered distributed set-up, problem~\eqref{pb:lovasz} can be addressed in place of problem~\eqref{pb:standard}. Moreover, since $F_i$ is submodular for all $i$, then $f_i$ is a continuous, piece-wise affine, nonsmooth convex function, see, e.g.,~\cite{bach2013learning}.

In order to compute a single block of a subgradient of $f_i$, each agent $i$ is equipped with a local routine (reported next), that we call \textsc{BlockGreedy} and that resembles a local (block-wise) version of the greedy algorithm in~\cite[Section~3.2]{bach2013learning}.
This routine takes as inputs a vector $y$ and the required block $\ell$, and returns the $\ell$-th block of a subgradient $g_i$ of $f_i$ at $y$. For the sake of simplicity, suppose $l$ is a single component block. Moreover, assume to have a routine \textsc{PartialSort} that generates an ordering $\{m_1,\ldots,m_p\}$ such that
$y_{m_1}\geq\ldots\geq y_{m_p}$,
$m_p=\ell$ and
$y_{r}\leq y_\ell$ for each
$r\in\until{n}\setminus\{m_1,\ldots,m_p\}$.
Then, the \textsc{BlockGreedy} algorithm reads as follows.
\begin{algorithm}
  \renewcommand{\thealgorithm}{}
  \floatname{algorithm}{Routine}
    \begin{algorithmic}[h!]
    \small
      \inpt $y$, $\ell$

      \State Obtain a partial order via
      $$
        \{m_1,\ldots,m_{p-1}, m_{p}=\ell\}= \textsc{PartialSort} (y)
      $$

      \State Evaluate $g_{i,m_p}$ as %
  \begin{align*}
    g_{i,\ell}
    \!\!=\!
    \begin{cases}
    F_i(\{\ell\}),
    & \text{if } p = 1
    \\
    F_i(\{m_1 \ldots m_{p-1}, \ell \}) \!-\! F_i(\{m_1 \ldots m_{p-1} \}), \!\!
    & \text{otherwise}%
    \end{cases}
  \end{align*}
    \outpt $g_{i,\ell}$
    \end{algorithmic}
  \caption{\textsc{BlockGreedy}$(y,\ell)$ for agent $i$}
  \label{alg:local_greedy}
\end{algorithm}

The MICKY algorithm works as follows. Each agent stores a local solution estimate $x_i^k$ of problem~\eqref{pb:lovasz} and, for each in-neighbor $j\in\NNii$, a local copy of the corresponding solution estimate $\lxi$. At the beginning, each node selects the initial condition $x_i^0$ at random in $[0,1]^n$ and shares it with its out-neighbors. We associate to the communication graph $\GG$ a weighted adjacency matrix $\WW\in\R^{N\times N}$ and we denote with $w_{ij}=[\WW]_{ij}$ the weight associated to the edge $(j,i)$.
At each iteration $k$,
agent $i$ performs three tasks:
\begin{enumerate}[label=(\roman*)]
  \item it computes a weighted average $y_i^k = \sum_{j\in\NNii} w_{ij} \lxi$;
  \item it picks randomly (with arbitrary probabilities bounded away from $0$) a block $\ell_i^k\in\until{n}$ and performs the \textsc{BlockGreedy}$(y_i^k,\ell_i^k)$;
\item it updates $x_{i,\ell_i^k}^{k+1}$ according to~\eqref{eq:up_alg}, where $\Pi_{[0,1]}[\cdot]$ is the projector on the set $[0,1]$ and $\alpha_i^k\in(0,1)$, and broadcasts it to its out-neighbors $j\in\NNio$.
\end{enumerate}
Agents halt the algorithm after $K>0$ iterations and recover the local estimates $X_i^{\text{end}}$ of the set solution to problem~\eqref{pb:standard} by thresholding the value of $x_i^K$ as in~\eqref{eq:thresh}. Notice that, in order to avoid to introduce additional notation, we have assumed each block of the optimization variable to be scalar (so that blocks are selected in $\until{n}$). However, blocks of arbitrary sizes can be used (as shown in the subsequent analysis).
A pseudocode of the proposed algorithm is reported in the next table.

\begin{algorithm}[H]
  \renewcommand{\thealgorithm}{}
  \floatname{algorithm}{Algorithm}
	\begin{algorithmic}[H!]
	\small
          \init $x_i^0$
		\item[]
    \For{$k=1,\dots,K-1$}
      \State \textsc{Update} for all $j\in\NNii$
        \begin{equation}\label{eq:xl_update}
            x_{j,\ell}^k{\mid}_i =
            \begin{cases}
                x_{j,\ell}^k, &\text{if }\ell=\ell_j^{k-1}\\
                x_{j,\ell}^{k-1}{\mid}_i, &\text{otherwise}
            \end{cases}
        \end{equation}
      \State \textsc{Compute}
      \begin{equation}
        y_i^k = \sum_{j\in\NNii} w_{ij} \lxi
      \end{equation}
      \State \textsc{Pick} randomly a block $\ell_i^{k}\in\until{n}$ %
      \State \textsc{Compute}
      \begin{equation}
        g_{i,\ell_i^k}^k = \textsc{BlockGreedy}(y_i^k,\ell_i^k)
      \end{equation}
      \State \textsc{Update}
      \begin{equation}\label{eq:up_alg}
        x_{i,\ell}^{k+1} = \begin{cases}
          \Pi_{[0,1]}\left[x_{i,\ell_i^k}^{k} - \alpha_i^k g_{i,\ell_i^k}^k\right]&\text{if }\ell=\ell_i^k\\
          x_{i,\ell}^{k}&\text{otherwise}
          \end{cases}
      \end{equation}
      \State \textsc{Broadcast} $x_{i,\ell_i^k}^{k+1}$ to all $j\in\NNio$
    \EndFor
    \textsc{Thresholding}
    \begin{equation}\label{eq:reconstruct}
      X_i^{\text{end}} = \{\ell\mid x_{i,\ell}^K > \tau\}
    \end{equation}

	\end{algorithmic}
	\caption{MICKY\! (Mixing\! Blocks\! and\! Greedy\! Method)}\label{alg:DSM}
\end{algorithm}

\subsection{Discussion}
The proposed algorithm possesses many interesting features. Its distributed nature requires agents to communicate only with their direct neighbors, without resorting to multi-hop communications. Moreover, all the local computations involve locally defined quantities only. In fact, stepsize sequences and block drawing probabilities are locally defined at each node.

Regarding the block-wise updates and communications, they bring benefits in two areas.
Communicating single blocks of the optimization variable, instead of the entire one, can significantly reduce the communication bandwidth required by each agent in broadcasting their local estimates. This makes the proposed algorithm implementable in networks with communication bandwidth restrictions. Moreover, the classical greedy algorithm requires to evaluate $|V|$ times the submodular function in order to produce a subgradient. When $|V|$ is very high and an oracle for evaluating functions $F_i$ is not available, this can be a very time consuming task.
For example, in the example application in Section~\ref{sec:numerical}, we will resort to the minimum graph cut problem. Evaluating the value of a cut for a graph in which $E\subseteq V\times V$ is the set of arcs, requires a running-time $O(|E|)$.
In the \textsc{BlockGreedy} routine, in contrast with what happens in the standard greedy routine,
the sorting operation is (possibly) performed only on a part of the entire vector $y$, i.e., until the $\ell$-th component has been sorted. Thus, our routine evaluates the $\ell$-th component of the subgradient in at most two evaluations of the submodular function.

\subsection{Analysis}
In order to state the convergence properties of the proposed algorithm,
let us make the following two assumptions on the communication graph and the associated weighted adjacency matrix $\WW$.
\begin{assumption}[Strongly connected graph]\label{assumption:graph}
  The digraph $\GG=(\VV,\EE,\WW)$ is strongly connected.\oprocend
\end{assumption}
\begin{assumption}[Doubly stochastic weight matrix]\label{assumption:stochastic}
    For all $i,j\in\VV$, the weights $w_{ij}$ of the weight matrix $\WW$ satisfy
    \begin{enumerate}[label=(\roman*)]
        \item if $i\neq j$, $w_{ij}>0$ if and only if $j\in\NNii$;
        \item there exists a constant $\eta>0$ such that $w_{ii}\geq\eta$ and if $w_{ij}>0$, then $w_{ij}\geq\eta$;
        \item $\sum_{j=1}^N w_{ij}=1$ and $\sum_{i=1}^N w_{ij}=1$.\oprocend
    \end{enumerate}
\end{assumption}
The above two assumptions are very common when designing distributed optimization algorithms. In particular, Assumption~\ref{assumption:graph} guarantees that the information is spread through the entire network, while Assumption~\ref{assumption:stochastic} assures that each agent gives sufficient weight to the information coming from its in-neighbors.

Let $\bar{x}^k\triangleq\frac{1}{N}\sum_{i=1}^N x_i^k$ be the average over the agents of the local solution estimates at iteration $k$ and define $f_{best}(x_i^k)\triangleq\min_{r\leq k} \E[f(x_i^r)]$.
Then, in the next result, we show that by cooperating through the proposed algorithm all the agents agree on a common solution and the produced sequences $\{x_i^t\}$ are asymptotically cost optimal in expected value when $K\to\infty$.
\begin{theorem}\label{thm:conv}
  Let Assumptions~\ref{assumption:graph} and~\ref{assumption:stochastic} hold and let $\{x_i^k\}_{k\geq 0}$ be the sequences generated through the MICKY algorithm. Then,
    if the sequences $\{\alpha_i^k\}$ satisfy
	\begin{equation}
		\sum_{k=0}^\infty \alpha_i^k = \infty, \qquad \sum_{k=0}^\infty (\alpha_i^k)^2 < \infty, \qquad \alpha_i^{k+1}\leq\alpha_i^k\label{eq:alpha}
	\end{equation}
  for all $k$ and all $i\in\VV$, it holds that,
  \begin{equation}\label{eq:consensus}
		\lim_{k\to\infty}\E[\|x_i^k-\bar{x}^k\|]=0,
  \end{equation}
  and
	\begin{equation}\label{eq:optimality}
		\lim_{k\to\infty}f_{best}(x_i^k)=f(x^\star),
	\end{equation}
	being $x^\star$ the optimal solution to~\eqref{pb:lovasz}.
\end{theorem}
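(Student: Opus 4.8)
The plan is to recognize MICKY as a specific instance of the Distributed Block Proximal Method of~\cite{farina2019arXivProximal} applied to the reformulated problem~\eqref{pb:lovasz}, and then to invoke the convergence guarantees established there. Three ingredients must be checked: (i) that problem~\eqref{pb:lovasz} meets the structural hypotheses of that block proximal framework; (ii) that the \textsc{BlockGreedy} routine returns the selected block of a genuine subgradient of $f_i$; and (iii) that the stepsize, graph, and weighting conditions in the statement coincide with those required by the reference.

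First I would verify the structural hypotheses. The cost $f=\sum_i f_i$ is convex, since each $f_i$ is the Lov\`asz extension of a submodular $F_i$ and is therefore continuous, piece-wise affine and convex, as recalled after~\eqref{pb:lovasz}. The constraint set $[0,1]^n$ is convex, compact, and separable across blocks, so the projection $\Pi_{[0,1]}[\cdot]$ appearing in~\eqref{eq:up_alg} is exactly the block-wise proximal operator of the indicator function of the constraint set. Moreover, every subgradient of $f_i$ at a point $y$ is a maximizer of $w^\top y$ over the base polyhedron $\BB(F_i)$, which is a compact polytope; hence subgradients are uniformly bounded, as required by the subgradient-based analysis.

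The crux of the argument, and the step I expect to be the main obstacle, is (ii): showing that the scalar $g_{i,\ell}$ produced by \textsc{BlockGreedy}$(y_i^k,\ell)$ is the $\ell$-th component of a subgradient of $f_i$ at $y_i^k$. The key fact (see~\cite[Section~3.2]{bach2013learning}) is that, for any ordering $\{m_1,\ldots,m_n\}$ with $y_{m_1}\geq\cdots\geq y_{m_n}$, the vector with entries $g_{m_r}=F_i(\{m_1,\ldots,m_r\})-F_i(\{m_1,\ldots,m_{r-1}\})$ lies in $\BB(F_i)$ and attains $\max_{w\in\BB(F_i)}w^\top y=f_i(y)$, i.e.\ it is a subgradient of $f_i$ at $y$. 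I would then argue that the component indexed by $\ell=m_p$ depends only on the \emph{unordered} set $\{m_1,\ldots,m_{p-1}\}$ of indices whose $y$-values are no smaller than $y_\ell$, and neither on their internal order nor on the indices ranked after $\ell$; this is precisely the set that \textsc{PartialSort} isolates by the stated property $y_r\leq y_\ell$ for all $r\notin\{m_1,\ldots,m_p\}$. Consequently, halting the sort at $\ell$ and evaluating the marginal $F_i(\{m_1,\ldots,m_{p-1},\ell\})-F_i(\{m_1,\ldots,m_{p-1}\})$ yields exactly the $\ell$-th entry of the full greedy subgradient, with any tie-breaking among entries equal to $y_\ell$ merely corresponding to a different, but still valid, choice of subgradient.

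Finally, with (i) and (ii) in place, the MICKY iteration is literally the block proximal subgradient update of~\cite{farina2019arXivProximal}: each agent mixes neighbor estimates through the doubly stochastic weights $w_{ij}$, selects a block with probability bounded away from zero, and performs a projected step along the computed block subgradient. Assumption~\ref{assumption:graph} supplies the strong connectivity needed to propagate information, while Assumption~\ref{assumption:stochastic} provides the doubly stochastic mixing; the conditions~\eqref{eq:alpha} are the standard diminishing, square-summable, non-increasing stepsize requirements of that reference. Invoking its main convergence theorem then yields the consensus claim~\eqref{eq:consensus} and the expected cost-optimality~\eqref{eq:optimality}, the latter being the conclusion that the non-increasing quantity $f_{best}(x_i^k)$ converges to $f(x^\star)$, which completes the proof.
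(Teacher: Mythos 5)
Your proposal is correct and follows essentially the same route as the paper: it casts MICKY as an instance of the Distributed Block Proximal Method of~\cite{farina2019arXivProximal}, argues that \textsc{BlockGreedy} returns a block of a bounded subgradient of the Lov\`asz extension via the greedy characterization in~\cite[Section~3.2]{bach2013learning}, and then invokes the consensus and optimality results of that reference under Assumptions~\ref{assumption:graph}--\ref{assumption:stochastic} and the stepsize conditions~\eqref{eq:alpha}. If anything, your justification of why the partial sort suffices (the $\ell$-th greedy component depends only on the unordered set of indices ranked above $\ell$) is spelled out more explicitly than in the paper, which simply cites the reference.
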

\begin{proof}
  By using the same arguments used in~\cite[Lemma~3.1]{farina2019arXivProximal}, it can be shown that $\lxi=x_j^k$ for all $k$ and all $i,j\in\VV$. Then~\eqref{eq:consensus} follows from~\cite[Lemma~5.11]{farina2019arXivProximal}.
  Moreover, as anticipated, it can be shown that $g_{i,\ell_i^k}^k$ is the $\ell_i^k$-th block of a subgradient of the function $f_i(x)$ in problem~\eqref{pb:lovasz} (see, e.g.,~\cite[Section~3.2]{bach2013learning}).
  In fact, being $f_i$ defined as the support function of the base polyhedron $\BB(F_i)$, i.e., $f_i(x)=\max_{w\in\BB(F_i)}w^\top x$, the greedy algorithm~\cite[Section~3.2]{bach2013learning} iteratively computes a subgradient of $f_i$ component by component. Moreover, subgradients of $f_i$ are bounded by some constant $G<\infty$, since every component of a subgradient of $f_i$ is computed as the difference of $F_i$ over two different subsets of $V$.
  Given that, the proposed algorithm can be seen as a special instance of the Distributed Block Proximal Method in~\cite{farina2019arXivProximal}. Thus, since Assumptions~\ref{assumption:graph} and~\ref{assumption:stochastic} holds, it inherits all the convergence properties of the Distributed Block Proximal Method and under the assumption of diminishing stepsizes~\eqref{eq:alpha} respectively, the result in~\eqref{eq:optimality} follows (see~\cite[Theorem~5.15]{farina2019arXivProximal}). \oprocend
\end{proof}

Notice that the result in Theorem~\ref{thm:conv} does not say anything about the convergence of the sequences $\{x_i^k\}$, but only states that if diminishing stepsizes are employed, asymptotically these sequences are consensual and cost optimal in expected value.
Despite that, from a practical point of view, two facts typically happen. First, agents approach consensus, i.e., for all $i\in\until{N}$, the value $\|x_i^k-\bar{x}^k\|$ becomes small, extremely fast, so that they all agree on a common solution. Second, if the number of iterations $K$ in the algorithm is sufficiently large, the value of $x_i^K$ is a good solution to problem~\eqref{pb:lovasz}.
Then, given $x_i^K$, each agent can reconstruct a set solution to problem~\eqref{pb:standard} by using~\eqref{eq:reconstruct} and, in order to obtain the same solution for all the agents, we consider a unique threshold value, known to all the agents, $\tau\in[0,1]$.

\begin{remark}
	Notice that, by resorting to classical arguments, it can be easily shown from the analysis in~\cite{farina2019arXivProximal} that the convergence rate of $f_{best}$ in Theorem~\ref{thm:conv} is sublinear (with explicit rate depending on the actual stepsize sequence). Moreover, if constant stepsizes are employed, convergence of $f_{best}$ to the optimal solution is attained in expected value with a constant error with rate $O(1/k)$~\cite[Theorem~2]{farina2019arXivProximal}.
\end{remark}

\section{Cooperative image segmentation}\label{sec:numerical}

Submodular minimization has been widely applied to
computer vision problems as image classification, segmentation
and reconstruction, see,
e.g.,~\cite{stobbe2010efficient,jegelka2013reflection,
greig1989exact}.
In this section, we consider a binary image segmentation problem in which $N=8$ agents have to cooperate in order to separate an object from the background in an image of size $D\times D$ pixels (with $D=64$). Each agent has access only to a portion of the entire image, see Figure~\ref{fig:agent_start}, and can communicate according to the graph reported in the figure.

Before giving the details of the distributed experimental set-up let us introduce how such a problem is usually treated in a centralized way, i.e., by casting it into a $s$--$t$ minimum cut problem.

\subsection{$s$--$t$ minimum cut problem}
Assume the entire $D\times D$ image be available for segmentation, and denote as $V=\until{D^2}$ the set of pixels.
As shown, e.g., in~\cite{greig1989exact,boykov2006graph} this problem can be reduced
to an equivalent $s$--$t$ minimum cut problem, which can be approached by submodular minimization techniques.

\begin{figure}[]
  \centering
  \includegraphics[width=0.85\columnwidth]{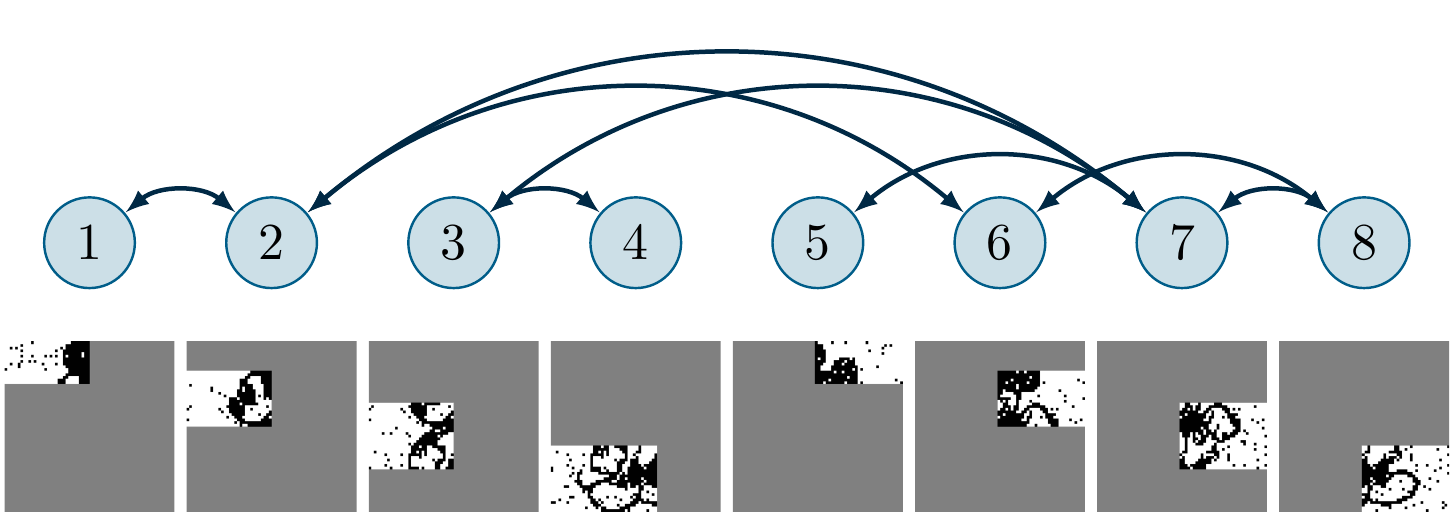}    %
  \caption{Cooperative image segmentation. The considered communication graph is depicted on top, where agents are represented by blue nodes. Under each node, the portion of the image accessible by the corresponding agent is depicted.}
  \label{fig:agent_start}
\end{figure}

More in detail, this approach is based on the construction of a weighted digraph $G_{s-t} = (V_{s-t},E_{s-t}, A_{s-t})$, where $V_{s-t}=\{1,\ldots,D^2,s,t\}$ is the set of nodes, $E_{s-t}\subseteq V_{s-t}\times V_{s-t}$ is the edge set and $A_{s-t}$ is a positive weighted adjacency matrix.
There are two sets of directed edges $(s,p)$ and $(p,t)$, with positive weights $a_{s,p}$ and $a_{p,t}$ respectively, for all $p\in V$. Moreover, there is an undirected edge $(p,q)$ between any two neighboring pixels with weight $a_{p,q}$.
The weights $a_{s,p}$ and $a_{p,t}$ represent individual penalties for assigning pixel $p$ to the object and to the background respectively. On the other hand, given two pixels $p$ and $q$, the weight $a_{p,q}$ can be interpreted as a penalty for a discontinuity between their intensities.

In order to quantify the weights defined above, let us denote by $I_p\in [0,1]$ the intensity of pixel $p$. Then, see, e.g.,~\cite{boykov2006graph}, $a_{p,q}$ is computed as
\begin{align*}
a_{p,q} = e^{-\frac{(I_p-I_q)^2}{2\sigma^2}},
\end{align*}
where $\sigma$ is a constant modeling, e.g., the variance of the camera noise.
Moreover, weights $a_{s,p}$ and $a_{p,t}$ are respectively computed as
\begin{align*}
a_{s,p} =& -\lambda \log\Pb(x_p = 1)\\
a_{p,t} =& -\lambda \log\Pb(x_p = 0),
\end{align*}
where $\lambda>0$ is a constant and $\Pb(x_p=1)$ (respectively $\Pb(x_p=0)$) denotes the probability of pixel $p$ to belong to the foreground (respectively background).

The goal of the $s$--$t$ minimum cut problem is to find a subset $X\subseteq V$ of pixels such that the sum of the weights of the edges from $X\cup\{s\}$ to $\{t\}\cup V\setminus X$ is minimized.

\subsection{Distributed set-up}
In the considered distributed set-up, $N=8$ agents are connected
according to a strongly-connected Erd\H{o}s-R\'{e}nyi random digraph and each of them has access only to a portion of the image (see Figure~\ref{fig:agent_start}). In this set-up, clearly, each agent can assign weights only to some edges in $E_{s-t}$ so that, it cannot segment the entire image on its own.

Let $V_i\subseteq V$ be the set of pixels seen by agent $i$. Each node $i$ assigns a local intensity $I^i_p$ to each pixel $p\in V_i$. Then, it computes its local weights as
 \begin{align*}
   a^i_{p,q} &=
  \begin{cases}
   e^{-\frac{(I^i_p-I^i_q)^2}{2\sigma^2}}, & \text{if } p,q\in V_i\\
   0, & \text{otherwise}
 \end{cases}\\
   a^i_{s,p} &=
  \begin{cases}
   -\lambda \log\Pb(x^i_p = 1), & \text{if } p\in V_i\\
   0, & \text{otherwise}
 \end{cases}\\
   a^i_{p,t} &=
  \begin{cases}
 -\lambda \log\Pb(x^i_p = 0), & \text{if } p\in V_i\\
   0, & \text{otherwise}
 \end{cases}
 \end{align*}

Given the above locally defined weights, each agent $i$ construct its private submodular function $F_i$ as
\begin{align}
F_i(X)=
  \sum_{\substack{p\in X \\ q\in V\setminus X}}\!\!  a^i_{p,q}
  +\!\!
  \sum_{q\in V\setminus X}\!\!  a^i_{s,q}
  +
  \sum_{p\in X}  a^i_{p,t}
  -\!\!
  \sum_{q\in V}\!\! a^i_{s,q}.\label{eq:submod_local}
\end{align}
Here, the first term takes into account the edges from $X$ to $V\setminus X$, the
second one those from $s$ to $V\setminus X$,
and the third one those from $X$ to $t$. The last term is a normalization term
guaranteeing $F_i(\emptyset)=0$.
Then, by plugging~\eqref{eq:submod_local} in problem~\eqref{pb:standard}, the optimization problem that the agents have to cooperatively solve in order to segment the image is
\begin{equation*}
\begin{aligned}
&\m_{X\subseteq V}\sum_{i=1}^{N}\left(
  \sum_{\substack{p\in X \\ q\in V\setminus X}}\!\!  a^i_{p,q}
  +\!\!
  \sum_{q\in V\setminus X}\!\!  a^i_{s,q}
  +
  \sum_{p\in X}  a^i_{p,t}
  -\!\!
  \sum_{q\in V}\!\! a^i_{s,q}\right)\!\!.
  \end{aligned}
\end{equation*}

\begin{figure}
    \begin{center}
     \includegraphics[width=.7\columnwidth]{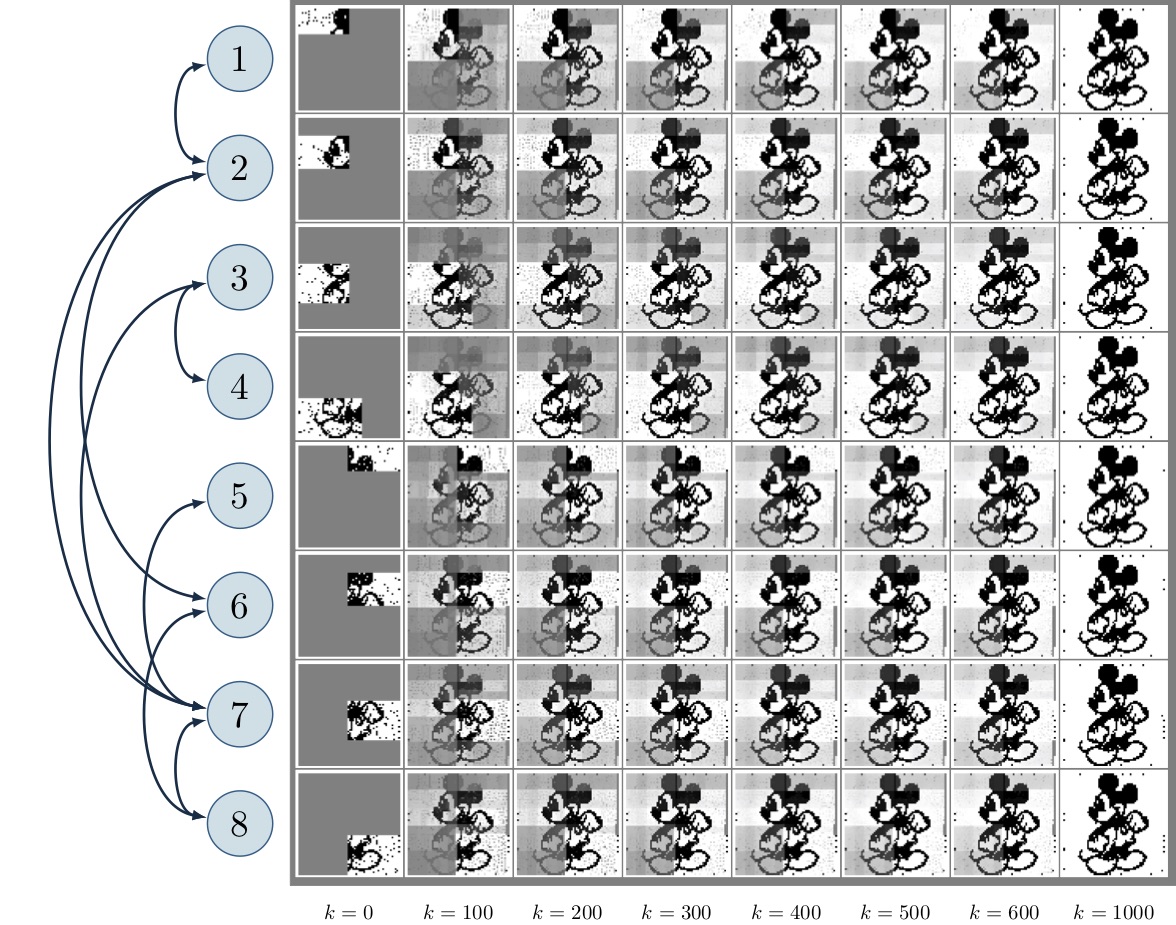}    %
    \caption{Cooperative image segmentation. Evolution of the local solution estimates for each agent in the network.}
    \label{fig:agent_stamps}
    \end{center}
\end{figure}

We applied the MICKY distributed algorithm to this set-up and we split the optimization variable in $40$ blocks.
In order to mimic possible errors in the construction  of the local weights,
we added some random noise to the image.
We implemented the MICKY algorithm by using the Python package DISROPT~\cite{farina2019disropt} and we ran it for $K=1000$ iterations. A graphical representation of the results is reported in Figure~\ref{fig:agent_stamps}.
 Each row is associated to one network agent while each column is associated to a different time stamp. More in detail, we show the initial condition at time $k=0$ and the candidate (continuous) solution at $k\in\{100,200,300,400,500,600\}$ iterations. The last column represents the solution $X_i^{\text{end}}$ of each agent obtained by thresholding $x_i^{k}$ with $k=1000$ and $\tau=0.5$.
As appearing in Figure~\ref{fig:agent_stamps}, the local solution set estimates $X_i^{\text{end}}$ are almost identical. Moreover, the connectivity structure of the network clearly affects the evolution of the local estimates.
Finally, the evolution of the cost error is depicted in Figure~\ref{fig:costconv}, where $X_i^k\triangleq \{i\mid x_i^k >\tau\}$.

 \begin{figure}[h!]
     \centering
     \includegraphics[scale=.78]{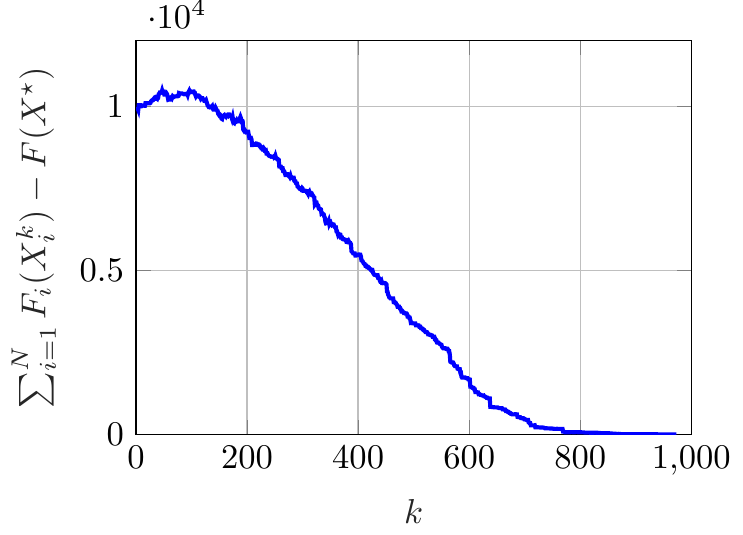}
     \caption{Numerical example. Evolution of the error between the cost computed at the (thresholded) local solution estimates and the optimal cost.}
     \label{fig:costconv}
 \end{figure}

\section{Conclusions}\label{sec:conclusions}
In this paper we presented MICKY, a distributed algorithm for solving submodular problems involving the minimization of the sum of many submodular functions without any central unit. It involves random block updates and communications, thus requiring a reduced local computational load and allowing its deployment on networks with low communication bandwidth (since it requires
a small amount of information to be transmitted at each iteration).
Its convergence in expected value has been shown under mild assumptions. The MICKY algorithm has ben tested on a cooperative image segmentation problem in which each agent has access to only a portion of the entire image.

%

\end{document}